\newtheorem{theorem}{Theorem}
\theoremstyle{plain}
\newtheorem{conjecture}{Conjecture}
\newtheorem{corollary}{Corollary}
\numberwithin{equation}{section}
\DeclareMathOperator{\Arb}{arb}
\DeclareMathOperator{\arb}{arb}
\DeclareMathOperator{\col}{col}
\begin{document}
\title[Strong arboricity of graphs]{Strong arboricity of graphs}
\author[T. Bartnicki]{Tomasz Bartnicki}
\address{Stanis\l aw Staszic State University of Applied Sciences in Pi\l a, 64-920 Pi\l a, Poland}
\email{t.bartnicki@ans.pila.pl}

\author[S. Czerwiński]{Sebastian Czerwi\'{n}ski}
\address{Faculty of Mathematics, Computer Science, and Econometrics,
University of Zielona G\'{o}ra, 65-516 Zielona G\'{o}ra, Poland}
\email{s.czerwinski@wmie.uz.zgora.pl}

\author[J. Grytczuk]{Jaros\l aw Grytczuk}
\address{Faculty of Mathematics and Information Science, Warsaw University
of Technology, 00-662 Warsaw, Poland}
\email{grytczuk@tcs.uj.edu.pl}
\thanks{The third author was supported in part by Narodowe Centrum Nauki, grant 2020/37/B/ST1/03298.}

\author[Z. Miechowicz]{Zofia Miechowicz}
\address{Stanis\l aw Staszic State University of Applied Sciences in Pi\l a, 64-920 Pi\l a, Poland}
\email{z.miechowicz@ans.pila.pl}

\begin{abstract}
An edge coloring of a graph $G$ is \emph{woody} if no cycle is monochromatic. The \emph{arboricity} of a graph $G$, denoted by $\arb (G)$, is the least number of colors needed for a woody coloring of $G$. A coloring of $G$ is \emph{strongly woody} if after contraction of any single edge it is still woody. In other words, not only any cycle in $G$ can be monochromatic but also any \emph{broken cycle}, i.e., a simple path arising by deleting a single edge from the cycle. The least number of colors in a strongly woody coloring of $G$ is denoted by $\zeta(G)$ and called the \emph{strong arboricity} of $G$.

We prove that $\zeta(G)\leqslant \chi_a(G)$, where $\chi_a(G)$ is the \emph{acyclic chromatic number} of $G$ (the least number of colors in a proper vertex coloring without a $2$-colored cycle). In particular, we get that  $\zeta(G)\leqslant 5$ for planar graphs and $\zeta(G)\leqslant 4$ for otuterplanar graphs. We conjecture that $\zeta(G)\leqslant 4$ holds for all planar graphs. We also prove that $\zeta(G)\leqslant 4(\arb(G))^2$ holds for arbitrary graph $G$. A natural generalziation of strong arboricity to \emph{matroids} is also discussed, with a special focus on cographic matroids.
\end{abstract}

\maketitle

\section{Introduction}
Let $G$ be a simple graph. Consider an edge coloring of $G$ such that every color class forms a forest, i.e., a subgraph not containing any cycles. For convenience, we call such colorings \emph{woody}. The \emph{arboricity} of a graph $G$, denoted by $\arb(G)$, is the least number of colors in a woody coloring of $G$.

By the well-known Nash-Williams' theorem \cite{Nash-Williams}, $\arb(G)=\lceil f(G) \rceil$, where $f(G)$ is the \emph{fractional arboricity} of $G$, a quantity given by:
\begin{equation}
	f(G)=\max_{H\subseteq G}{\frac{|E(H)|}{|V(H)|-1}}.
\end{equation}
In particular, $\Arb(G)\leqslant 3$ for any simple planar graph $G$, since every such graph on $n$ vertices has at most $3n-6$ edges.

An interesting higher order analog of graph arboricity was introduced by Ne\v{s}et\v{r}il, Ossona de Mendez, and Zhu \cite{NesetrilOssonaZhu}. It is defined as follows. For a fixed positive integer $p$, a coloring of the edges of a graph $G$ is called \emph{$p$-woody} if every cycle $C$ in $G$ contains at least $\min\{|C|,p+1\}$ colors. The least number of colors needed for a $p$-woody coloring of $G$ is called the $p$\emph{-arboricity} of a graph $G$, and denoted by $\Arb_{p}(G)$. Thus $\Arb_1(G)=\Arb(G)$ is the standard arboricity, while $\Arb_2(G)$ can be seen as a relaxed version of the \emph{acyclic chromatic index} of $G$, denoted as $a'(G)$. This is a well studied graph invariant introduced independently by Fiam\v{c}ik \cite{Fiamcik}, and Alon, Sudakov, and Zaks \cite{AlonSudakovZaks}, defined as the least number of colors in a \emph{proper} edge coloring with no $2$-colored cycles. In \cite{GreenhilPikhurko}, Greenhill and Pikhurko studied a generalized acyclic chromatic index $a'_p(G)$ defined  by a similar condition for cycles.

In \cite{NesetrilOssonaZhu}, Ne\v{s}et\v{r}il, Ossona de Mendez, and Zhu proved a theorem characterizing classes of graphs with bounded $p$-arboricity in terms of expansion parameters. Further results were obtained in \cite{BartnickiBCFGM}. In particular, it is proved there that $\Arb_{p}(G)\leqslant p+1$ for every planar graph $G$ of girth at least $2^{p+1}$, for all $p\geqslant 2$. And a similar result holds for graphs of any fixed genus. Additionally for outerplanar graphs we have that $\Arb_{2}(G)\leqslant 5$, which is optimal. Notice however, that $2$-arboricity is not bound for planar graphs. Indeed, it is not hard to see that $\arb_2(K_{2,n})$ exceeds any finite upper bound if $n$ is sufficiently large.

In this paper we introduce another variant of graph arboricity, which is located between $\arb_1(G)$ and $\arb_2(G)$. It is defined as follows. A subgraph $B$ of a simple graph $G$ is called a \emph{broken cycle} if $B=C-e$ for some cycle $C$ and some edge $e$ of $C$. A coloring of the edges of $G$ is \emph{strongly woody} if no broken cycle is monochromatic. The least number of colors needed for such a coloring is denoted by $\zeta(G)$ and called the \emph{strong arboricity} of $G$. It is not hard to see that every graph $G$ satisfies $
\arb_1(G)\leqslant \zeta(G)\leqslant\arb_2(G)$.

Equivalently, the strongly woody coloring of $G$ is just a woody coloring that stays woody in a graph $G/e$ obtained by \emph{contraction} of an arbitrary edge $e$ in $G$. This is analogous to the well studied notion of \emph{strong edge coloring} of graphs (see \cite{Molloy-Reed}), where the coloring must be proper even if any single edge is contracted.

We prove a number of results on the strong arboricity relating this new parameter to the existing ones. For instance, we prove that $\zeta(G)\leqslant \chi_a(G)$, where $\chi_a(G)$ is the \emph{acyclic chromatic number} of $G$, defined as the least number of colors in a proper vertex coloring without a $2$-colored cycle. In particular, by the famous theorem of Borodin \cite{Borodin}, we have $\zeta(G)\leqslant 5$ for planar graphs. We conjecture however that $\zeta(G)\leqslant 4$ holds for all planar graphs. We also prove that $\zeta(G)\leqslant 4(\arb(G))^2$ holds for arbitrary graph $G$. We also discuss some natural variants of strong arboricity (list, paint, game, etc.), as well as generalizations of these notions to \emph{matroids}.

\section{The results}

We start with some examples and simple observations. First notice that in a strongly woody coloring every triangle must be rainbow (no color occurs twice). In general, if a cycle $C$ has at least three colors, then no broken cycle $B=C-e$ arising from $C$ can be monochromatic. However, a cycle $C$ (of size bigger than $3$) may be $2$-colored and still can be safe, what happens exactly when each of the two colors occurs at least twice on $C$ (see Fig. \ref{Zlamasy1}).
\begin{figure}[ht]
	
	\begin{center}
		
		\resizebox{13cm}{!}{
			
			\includegraphics{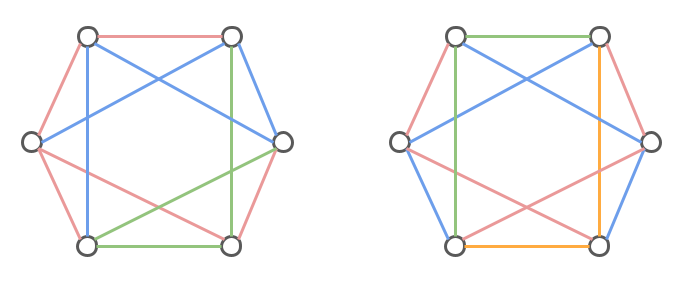}
			
		}
		
		\caption{An example of a graph $G$ with $\arb(G)=3$ and $\zeta(G)=4$ (a woody 3-coloring (left) and a strongly woody 4-coloring (right)).}
		\label{Zlamasy1}
	\end{center}
\end{figure}
\subsection{Planar graphs}
We start with showing that the strong arboricity is at most $\chi_a(G)$. Recall that $\chi_a(G)$ is the least number of colors in a proper vertex coloring of $G$, in which no cycle is $2$-colored. So, there is no problem with odd cycles as they must be $3$-colored, but one has to place the third color also on every even cycle.

\begin{theorem}\label{Theorem Acyclic}
Every simple graph $G$ satisfies $\zeta(G)\leqslant \chi_a(G)$.
\end{theorem}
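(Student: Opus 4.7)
My plan is to construct the edge coloring directly from the vertex coloring via a modular sum. Let $k = \chi_a(G)$ and fix an acyclic $k$-coloring $c \colon V(G) \to \{0, 1, \ldots, k-1\}$. Define an edge coloring $\phi$ by
\[
\phi(uv) \;=\; c(u) + c(v) \pmod{k}
\]
for every $uv \in E(G)$. By construction $\phi$ uses at most $k$ colors, so the task reduces to verifying that $\phi$ is strongly woody.

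I would verify this by contradiction. Suppose $B = v_0 v_1 \cdots v_m$ is a monochromatic broken cycle of color $\alpha$ under $\phi$, arising from a cycle $C = B \cup \{v_0 v_m\}$ of $G$ by deleting the edge $v_0 v_m$. Monochromaticity gives $c(v_{i-1}) + c(v_i) \equiv \alpha \pmod{k}$ for every $i = 1, \ldots, m$, and hence
\[
c(v_i) \;\equiv\; \alpha - c(v_{i-1}) \pmod{k}.
\]
A short induction then shows that the values $c(v_i)$ alternate along $B$ between $c(v_0)$ and $\alpha - c(v_0) \pmod{k}$. Since $V(C) = V(B)$, the cycle $C$ is $c$-colored by at most two values. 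Hence either two adjacent vertices of $C$ share a $c$-color (violating properness of $c$), or $C$ is a proper $2$-colored cycle (forbidden by the acyclicity of $c$). Both outcomes contradict the hypothesis on $c$, completing the proof.

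The main obstacle, if any, is identifying the right rule for $\phi$. Neither the naive $\phi(uv) = \min(c(u), c(v))$ nor $\phi(uv) = \max(c(u), c(v))$ works in general: on a $4$-cycle with vertex colors $(1, 2, 1, 3)$ the min rule produces a monochromatic cycle, and on the $4$-cycle with $(3, 1, 3, 2)$ the max rule does the same. What saves the additive formula is that the linear constraint $c(v_{i-1}) + c(v_i) \equiv \alpha \pmod{k}$ forces the $c$-values along $B$ into an alternating pattern of length at most two, so the cycle $C$ already carries enough structural information to contradict the acyclicity of $c$ directly.
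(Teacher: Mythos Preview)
Your argument is correct and matches the paper's proof essentially line for line: the paper also defines the edge coloring by $g(uv)=f(u)+f(v)$ in $\mathbb{Z}_k$ from an acyclic vertex coloring $f$, and derives the same alternation $f(v_1)=f(v_3)=\cdots$, $f(v_2)=f(v_4)=\cdots$ to force a $2$-colored cycle. Your additional remarks on why $\min$/$\max$ fail are extra commentary, not a different method.
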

\begin{proof}Assume that $\chi_a(G)=k$ and fix any acyclic coloring $f:V(G)\rightarrow \mathbb{Z}_k$. Consider a derived coloring of the edges of $G$ defined for any edge $e=uv$ by $g(e)=f(u)+f(v)$ in the group $\mathbb{Z}_k$. We claim that in this coloring no broken cycle is monochromatic. Indeed, suppose that $C-e$ is monochromatic for some cycle $C$. Denote the vertices of $C$ in the cyclic order as $v_1,v_2,\ldots,v_r$, with $r\geqslant3$. Assume that $e=v_1v_r$. Since $C-e$ is monochromatic, we have
	\begin{equation}
	g(v_1v_2)=g(v_2v_3)=\cdots=g(v_{r-1}v_r),
	\end{equation} 
	which implies that
	\begin{equation}
	f(v_1)+f(v_2)=f(v_2)+f(v_3)=\cdots=f(v_{r-1})+f(v_r).
	\end{equation} 
	But this implies that
	\begin{equation}
	f(v_1)=f(v_3)=\cdots
	\end{equation} 
	and
	\begin{equation}
	 f(v_2)=f(v_4)=\cdots.
	\end{equation}
	So, $C$ is a $2$-colored cycle, which contradicts the acyclicity of the coloring $f$.
	\end{proof}
A famous result of Borodin \cite{Borodin} asserts that $\chi_a(G)\leqslant 5$ for every planar graph $G$, which is optimal in this class of graphs (see Fig. \ref{Zlamasy2}).
\begin{figure}[ht]
	
	\begin{center}
		
		\resizebox{13cm}{!}{
			
			\includegraphics{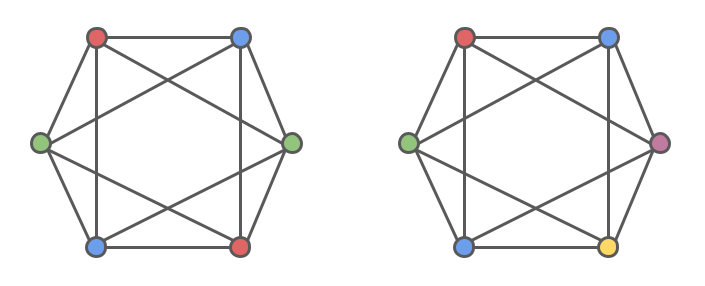}
			
		}
		
		\caption{A planar graph $G$ satisfying $\chi(G)=3$ and $\chi_a(G)=5$ (a proper 3-coloring (left) and an acyclic 5-coloring (right)).}
		\label{Zlamasy2}
	\end{center}
\end{figure}
\begin{corollary}\label{Corollary Acyclic Planar}
	Every planar graph $G$ satisfies $\zeta(G)\leqslant 5$.
\end{corollary}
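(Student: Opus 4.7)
The plan is to combine the two ingredients already on the table: Theorem~\ref{Theorem Acyclic} gives $\zeta(G)\leqslant\chi_a(G)$ for every simple graph, and Borodin's theorem \cite{Borodin} provides $\chi_a(G)\leqslant 5$ for every planar graph $G$. Chaining these two inequalities immediately yields $\zeta(G)\leqslant 5$ for planar $G$.

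More concretely, I would first invoke Borodin's theorem to fix a proper vertex coloring $f:V(G)\to\mathbb{Z}_5$ with no $2$-colored cycle. Then I would apply the construction from the proof of Theorem~\ref{Theorem Acyclic}: define the edge coloring $g(uv)=f(u)+f(v)\pmod 5$. By that theorem, $g$ is a strongly woody edge coloring, and it uses at most $5$ colors (the elements of $\mathbb{Z}_5$), so $\zeta(G)\leqslant 5$.

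There is essentially no obstacle here, since both components are already established; the corollary is purely a matter of composing the bounds. The only thing to be careful about is to cite Borodin's result correctly and to note that $g$ takes values in $\mathbb{Z}_5$, so at most $5$ colors are used regardless of how many sums $f(u)+f(v)$ actually appear.
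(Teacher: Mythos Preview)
Your proposal is correct and matches the paper's approach exactly: the corollary is stated immediately after citing Borodin's bound $\chi_a(G)\leqslant 5$, and it follows directly by combining this with Theorem~\ref{Theorem Acyclic}. The explicit description of the edge coloring $g$ is more detail than the paper gives, but it is accurate and harmless.
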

We do not know if this bound is optimal. There exist planar graphs with $\zeta(G)=4$ (see Fig. \ref{Zlamasy1}), but we have not found one demanding the fifth color (see the final section for a discussion).

\begin{corollary}\label{Corollary Acyclic Outerplanar}
	Every outerplanar graph $G$ satisfies $\zeta(G)\leqslant 3$.
\end{corollary}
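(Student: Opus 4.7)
The plan is to combine Theorem~\ref{Theorem Acyclic} with the observation that $\chi_a(G) \leqslant 3$ for every outerplanar graph $G$; this immediately yields $\zeta(G) \leqslant \chi_a(G) \leqslant 3$.

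I would prove $\chi_a(G) \leqslant 3$ by induction on $|V(G)|$, exploiting the classical fact that every outerplanar graph contains a vertex $v$ of degree at most $2$. If $\deg(v) \leqslant 1$, delete $v$, apply induction to the outerplanar graph $G-v$, and reinsert $v$ with any color different from its unique neighbor (if any); since no new cycle passes through $v$, acyclicity is preserved. If $\deg(v) = 2$ with neighbors $u, w$, and the colors $f(u), f(w)$ inherited from an acyclic $3$-coloring of $G-v$ happen to be distinct, assign $v$ the third color; every new cycle through $v$ then uses all three colors at $\{u,v,w\}$ and so cannot be $2$-colored.

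The main obstacle is the remaining subcase $\deg(v) = 2$ with $uw \notin E(G)$, where a priori $f(u)$ might equal $f(w)$ and both candidate colors for $v$ could close off a $2$-colored cycle. The outerplanar structure circumvents this: in an outerplanar embedding the neighbors $u$ and $w$ of a degree-$2$ vertex $v$ lie consecutively on the outer face, so $G' = (G-v) + uw$ is again outerplanar (the cut-vertex case is handled similarly, drawing the new edge through the outer face). Applying the inductive hypothesis to $G'$ rather than to $G-v$ forces $f(u) \neq f(w)$, reducing to the previous case. A routine verification shows that every cycle of $G$ through $v$ corresponds to a cycle of $G'$ through the new edge $uw$ with the same set of vertex-colors augmented by $f(v)$, so the acyclicity of $f$ on $G'$ transfers to $G$.

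Finally, Theorem~\ref{Theorem Acyclic} delivers $\zeta(G) \leqslant \chi_a(G) \leqslant 3$, establishing the corollary.
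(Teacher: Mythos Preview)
Your approach is exactly the paper's: invoke Theorem~\ref{Theorem Acyclic} together with the fact that $\chi_a(G)\leqslant 3$ for outerplanar graphs. The paper simply asserts the latter bound without proof, whereas you supply an inductive argument for it; your argument is correct (the cleanest justification that $G'=(G-v)+uw$ remains outerplanar is that it is obtained from $G$ by contracting the edge $uv$, and outerplanarity is minor-closed), so the only difference is that you have written out a proof of a standard auxiliary fact that the paper takes for granted.
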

\begin{proof}
	It suffices to notice that $\chi_a(G)\leqslant 3$.
\end{proof}
Notice that this bound is optimal if only $G$ contains a triangle. In general, if a planar graph $G$ on $n$ vertices is triangle-free, then by Euler's formula it has at most $2n-4$ edges, so $\arb(G)\leqslant 2$. This leads to the following simple result.

\begin{theorem}\label{Theorem Triangle-free Planar}
	Every triangle-free planar graph $G$ satisfies $\zeta(G)\leqslant 4$.
\end{theorem}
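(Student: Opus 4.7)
The plan is to exploit the bound $\arb(G) \le 2$ by splitting each of the two forests further into two colors using a parity trick, producing a $4$-coloring whose color classes are star forests.

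More concretely, first decompose $E(G) = F_1 \cup F_2$ into two forests, which exists by Nash--Williams' theorem. Next, I would apply the following auxiliary claim to each $F_i$: every forest admits a $2$-edge-coloring in which each color class is a star forest (equivalently, contains no path of length $3$). To prove the claim, root each component tree arbitrarily and, for every edge $uv$ with $u$ the parent of $v$, assign color $\mathrm{depth}(v) \bmod 2$. A short case check on any putative monochromatic path $v_1 v_2 v_3 v_4$ of length $3$, based on which endpoint is the parent on each of the three edges, forces $v_2$ to be the parent of both $v_1$ and $v_3$ while $v_3$ is simultaneously the parent of $v_2$, contradicting the uniqueness of parents in a rooted tree.

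Color $F_1$ using colors $\{1,2\}$ in this way and $F_2$ using colors $\{3,4\}$; this uses $4$ colors total. To verify that the resulting coloring is strongly woody, suppose some broken cycle $B = C - e$ is monochromatic, say in a color from $\{1,2\}$. Since $G$ is triangle-free, $|C| \ge 4$, so $B$ is a path of length at least $3$ sitting inside $F_1$ and monochromatic. This contradicts the fact that each color class in the $2$-coloring of $F_1$ is a star forest and hence contains no path of length $3$. The case of a color from $\{3,4\}$ is symmetric, completing the proof.

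The main obstacle — really the only nontrivial ingredient — is the star-forest decomposition of a forest, i.e., the auxiliary claim above. The numerical coincidence that makes the whole argument work is that being triangle-free forces every broken cycle to have length at least $3$, precisely matching the length of the shortest path forbidden in a star forest.
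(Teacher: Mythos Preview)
Your proposal is correct and follows essentially the same route as the paper: decompose $G$ into two forests using $\arb(G)\le 2$, two-colour each forest by the parity of the depth of the child endpoint so that no colour class contains a $P_4$, and then use the triangle-free hypothesis to ensure every broken cycle has at least three edges. The only cosmetic difference is that you phrase the auxiliary claim as ``each colour class is a star forest'' and sketch a case analysis, whereas the paper simply says the edges are coloured ``alternately, accordingly to the parity of the distance to the root''; the content is identical.
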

\begin{proof}
First notice that any forest $F$ has a $2$-coloring of the edges such every path of length $3$ is non-monochromatic. Indeed, this can be done by coloring the edges alternately, accordingly to the parity of the distance to the root. Now, since $\arb(G)\leqslant 2$, $G$ has an edge decomposition into two forests, $F_1$ and $F_2$. We may color each of these forests by two disjoint pairs of colors, as above, and there is no monochromatic path of length $3$ in the whole graph $G$. Since there are no broken cycles of size two in $G$, the proof is complete.
\end{proof}

A natural intuition is that we should further go down with the number of colors if the girth of a graph is sufficiently large. Indeed, in \cite{BartnickiBCFGM} we proved that $\arb_2(G)\leqslant 3$ if the girth of a planar graph $G$ is at least $8$. Hence, for such graphs we have $\zeta(G)\leqslant 3$. The next result shows that we may attain the best possible bound for graphs of bounded genus if the girth is sufficiently large.

\begin{theorem}\label{Theorem Genus Girth}
	If $G$ is a planar graph of girth at least $13$, then $\zeta(G)\leqslant2$. More generally, for every genus $\gamma>0$ there exists $g(\gamma)$ such that every graph $G$ with genus $\gamma$ and girth $g(\gamma)$ satisfies $\zeta(G)\leqslant 2$.  
\end{theorem}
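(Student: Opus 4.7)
My plan is to proceed by induction on $|E(G)|$, using a reducibility argument coupled with an Euler-formula count. First, we reduce to the case $\delta(G)\geq 2$: a vertex $v$ of degree at most $1$ can be deleted since its incident edge (if any) lies on no cycle, so the inductive strongly woody $2$-coloring of $G-v$ extends trivially.

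The key reducible configuration is a $2$-thread of length $2$, namely two consecutive degree-$2$ vertices $v_1, v_2$ with distinct neighbors $v_0, v_3$ in $G$ (note $v_0 v_3 \notin E(G)$, as otherwise $v_0 v_1 v_2 v_3 v_0$ would be a $4$-cycle, contradicting the girth hypothesis). Delete $v_1, v_2$ to form $G^-$; extending the inductive coloring requires choosing colors for the three thread edges. Every cycle $C$ in $G$ through the thread uses the entire thread plus a $v_0$-$v_3$ path $P$ in $G^-$, with $|P|\geq 10$ since $|C|\geq 13$. The extension succeeds provided the $G^-$-coloring places $v_0, v_3$ in different components of at least one color class---say the red forest. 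Then no all-red $v_0$-$v_3$ path exists in $G^-$, so every $P$ contains at least one blue edge; coloring the thread $R,B,R$ yields cycle $C$ with at least $2$ red edges (from the thread) and at least $2$ blue edges (one from the thread and one from $P$), whence no broken cycle of $C$ can be monochromatic. This motivates strengthening the inductive hypothesis to: \emph{for every planar graph $G$ of girth $\geq 13$ and every pair $(u,v)$ of non-adjacent vertices of $G$, there exists a strongly woody $2$-coloring of $G$ separating $u$ and $v$ in at least one color class}. One checks that the extension above preserves this separation property, since attaching $v_1$ to $v_0$'s component and $v_2$ to $v_3$'s component does not merge pre-existing components of either color.

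The existence of the length-$2$ thread follows from a short Euler computation: if all $2$-threads of a planar graph $G$ with girth $g$ and $\delta(G)\geq 2$ have length at most $K$, then handshaking gives $n_2 \leq K(m-n_2)$ and $2m \geq 3n - n_2$ (writing $n_2$ for the number of degree-$2$ vertices), while the face-degree bound gives $m \leq g(n-2)/(g-2)$; these combine to $g \leq 6(K+1)$, so setting $K=1$ yields $g\leq 12$---hence girth $\geq 13$ forces a $2$-thread of length $\geq 2$. For a graph on a surface of genus $\gamma>0$, Euler's formula gives $m \leq g(n-2+2\gamma)/(g-2)$, and an analogous computation (with finitely many small cases handled separately) shows that $g(\gamma) = O(\gamma)$ suffices. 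The main obstacle is making the strengthened inductive hypothesis go through cleanly: when the pair $(u,v)$ in $G$ intersects or equals $\{v_0,v_3\}$, the inductive application to $G^-$ may demand a coloring simultaneously separating two prescribed pairs, and verifying that such a simultaneous separation property is preserved under the induction is the heart of the proof.
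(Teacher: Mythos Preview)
Your discharging/Euler count is fine, and the thread-reduction idea is natural, but the argument does not close up. You correctly observe that to extend a strongly woody $2$-coloring of $G^-$ across the thread $v_0v_1v_2v_3$ you must already know that $v_0$ and $v_3$ lie in different components of one of the two color forests in $G^-$. This forces you to strengthen the inductive hypothesis to a separation statement for one prescribed pair. But then, to prove that strengthened statement for a pair $(u,v)$ in $G$, the reduction to $G^-$ needs a coloring of $G^-$ that simultaneously separates \emph{both} $(v_0,v_3)$ (to make the extension possible at all) \emph{and} $(u,v)$ (so that the conclusion holds for $G$). Your own last sentence concedes this: the induction now demands a two-pair separation property, whose inductive proof would demand three pairs, and so on. You call this ``the heart of the proof'' but do not supply it, and there is no evident uniform strengthening (e.g., ``separate any finite set of prescribed pairs'') that is both true and preserved by the thread reduction. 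As stated, the proposal is a plan with its central lemma missing.

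For comparison, the paper avoids induction entirely. It quotes a known structural result (for planar graphs of girth at least $13$, and more generally for bounded-genus graphs of large girth) giving a vertex partition $V(G)=A\cup F$ with $G[F]$ a forest and $A$ a $2$-independent set. One then colors the edges of $G[F]$ with one color and the $A$--$F$ edges (which form a star forest with centers in $A$) with the other. Every cycle meets $A$, so it picks up at least two star-edges; and between consecutive $A$-visits along the cycle there are at least two $F$-vertices (by $2$-independence), hence at least one forest-edge, giving at least two forest-edges overall. Thus every cycle has at least two edges of each color and no broken cycle is monochromatic. This global decomposition sidesteps the bookkeeping your inductive approach cannot avoid.
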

\begin{proof}
Let $G$ be a graph with genus $\gamma$. It is well known that if girth of $G$ is sufficiently large, then the vertices of $G$ can be split into two subsets, $V(G)=A\cup F$, such that $F$ induces a forest, while $A$ is $2$-independent, which means that every pair of vertices in $A$ is at distance at least $3$. In other words, the edges of $G$ between $A$ and $F$ form a star forest with star centers in $A$. Clearly, coloring the edges of the forest $G[F]$ by one color and the rest of edges (the star forest between $A$ and $F$) by the other color gives a coloring with no monochromatic broken cycle. The first part of the theorem follows from the result in \cite{BuCMRW}. 
\end{proof}
\subsection{Graphs of bounded arboricity}
In the next result we obtain a simple upper bound on $\zeta(G)$ in terms of the arboricity and the chromatic number. Since the later parameter is bounded in terms of the former, we get that $\zeta(G)$ is bounded for graphs of bounded $\arb(G)$.

\begin{theorem}\label{Theorem Arboricity Chromatic}
	Every graph $G$ satisfies $\zeta(G)\leqslant2\chi(G)\arb(G)$. Moreover, if $G$ is triangle-free, then $\zeta(G)\leqslant2\arb(G)$. 
\end{theorem}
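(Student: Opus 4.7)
The plan is to marry a proper vertex coloring with the forest decomposition coming from $\arb(G)$. Write $\chi = \chi(G)$ and $a = \arb(G)$, fix a proper coloring $f \colon V(G) \to \{1, \ldots, \chi\}$, and decompose $E(G) = F_1 \cup \cdots \cup F_a$ with each $F_i$ a forest whose components are rooted arbitrarily. For an edge $e \in F_i$ write $c(e)$ for its deeper endpoint (the child in the rooted tree), and assign the color
\[
\phi(e) = \bigl(i,\; d(c(e)) \bmod 2,\; f(c(e))\bigr),
\]
where $d(\cdot)$ denotes depth in the rooted tree; this uses at most $2\chi a$ triples, matching the claimed bound.

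The next step is to verify that no broken cycle $B = C - e$ is $\phi$-monochromatic, and I would split by the length of $B$. Suppose first that $|B| \geq 3$. Then all edges of $B$ lie in a single forest $F_i$ and share the same depth parity of the child; writing $v_0, v_1, \ldots, v_s$ for the vertices of $B$ in order with peak $v_p$ (the vertex of minimum depth on the path), the child of $v_j v_{j+1}$ is $v_j$ for $j < p$ and $v_{j+1}$ for $j \geq p$, so two consecutive edges on the same side of $v_p$ have children whose depths differ by $1$, hence differ in parity. Since $s \geq 3$, at least one of the two sides of $v_p$ contains two consecutive edges, contradicting monochromaticity. Suppose instead $|B| = 2$, so $B$ comes from a triangle $uvw$; if the two edges of $B$ lie in different forests the first coordinates of their colors already differ, so assume both lie in $F_i$, in which case they share the common vertex $v$. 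Either $u$ and $w$ are both children of $v$, in which case the edges carry $f$-labels $f(u) \neq f(w)$ (since $uw \in E(G)$), or $v$ lies between $u$ and $w$ on the tree path---say $u$ is the parent of $v$ and $w$ is a child of $v$---in which case $d(v)$ and $d(w)$ differ by $1$ and the second coordinates of the colors already disagree. In every case $B$ is not monochromatic.

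For the triangle-free improvement $\zeta(G) \leq 2 \arb(G)$, I would drop the $f$-coordinate from $\phi$, keeping only $(i, d(c(e)) \bmod 2)$ and a total of $2a$ colors. Triangle-freeness forces every broken cycle to have length at least $3$, so only the first half of the argument above is needed and it carries over verbatim. The main obstacle I anticipate is the bookkeeping around the peak in the length-$\geq 3$ case, where one must keep careful track of which endpoint of each edge is its child; once the principle ``two consecutive edges on the same side of the peak have children of opposite parity'' is set up correctly, the remainder is mechanical.
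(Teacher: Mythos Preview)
Your proof is correct and follows the same overall strategy as the paper: decompose into $\arb(G)$ forests, shade each forest by depth parity to kill monochromatic paths of length at least $3$, layer on a proper vertex coloring to handle broken triangles, and take the product for the final coloring. The one technical difference is in how the vertex coloring is converted into an edge label. The paper sets $g(uv)=f(u)+f(v)$ in $\mathbb{Z}_{\chi(G)}$, which makes every triangle rainbow outright and keeps the two factor colorings completely independent of one another; you instead record $f$ at the child endpoint determined by the forest orientation, which ties the third coordinate to the forest structure and forces the short case split in the triangle analysis (both $u,w$ children of $v$ versus $v$ lying between them). Both implementations give the same bound $2\chi(G)\arb(G)$, and for the triangle-free statement the two arguments are identical.
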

\begin{proof}
	Assume that $\chi(G)=k$ and $\arb(G)=\ell$. 
	Let $f$ be a proper vertex coloring of $G$ by $k$ colors taken from the group $\mathbb{Z}_k$. Let $g$ be a derived coloring of the edges, defined as in the previous proof by $g(uv)=f(u)+f(v)$. Notice that in coloring $g$, every triangle is rainbow. Hence, there are no monochromatic broken cycles of size $2$ in coloring $g$.
	
	To handle longer broken cycles we use the arboricity. Let $h$ be any woody coloring of the edges of $G$ with $\ell$ colors. So, each color class is a forest and we may color every tree with two shades of the color of the forest it belongs to so that every path with at least three edges is not monochromatic. It follows that every cycle with at least $4$ edges is either $3$-colored or it is a $2$-colored $C_4$ with exactly two edges in each of the two colors. So, in coloring $h$ there are no monochromatic broken cycles of size at least $3$.
	
	To complete the proof of the first assertion it suffices to construct the product coloring $p$ of the edges of $G$ defined by $p(e)=(g(e),h(e))$. For the second assertion the coloring $h$ alone is sufficient.
	\end{proof}
\begin{corollary}\label{Corollary Arboricity Square}
Every graph $G$ satisfies $\zeta(G)\leqslant 4(\arb(G))^2$.
\end{corollary}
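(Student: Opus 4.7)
The plan is to reduce the corollary to the first assertion of Theorem~\ref{Theorem Arboricity Chromatic} by bounding the chromatic number in terms of the arboricity. Concretely, once I know that $\chi(G)\leqslant 2\arb(G)$, I can substitute into $\zeta(G)\leqslant 2\chi(G)\arb(G)$ to obtain $\zeta(G)\leqslant 4(\arb(G))^2$, which is exactly the claim.

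So the only thing I really need to verify is the inequality $\chi(G)\leqslant 2\arb(G)$. I would do this via degeneracy. Let $\ell=\arb(G)$. By Nash-Williams, every subgraph $H\subseteq G$ satisfies $|E(H)|\leqslant \ell(|V(H)|-1)<\ell |V(H)|$, so the average degree of $H$ is strictly less than $2\ell$. Consequently $H$ has a vertex of degree at most $2\ell-1$, and this argument applies to every subgraph, so $G$ is $(2\ell-1)$-degenerate. A standard greedy coloring along a degeneracy ordering then gives $\chi(G)\leqslant 2\ell=2\arb(G)$.

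Combining the two bounds finishes the proof:
\begin{equation*}
\zeta(G)\;\leqslant\;2\chi(G)\arb(G)\;\leqslant\;2\cdot 2\arb(G)\cdot\arb(G)\;=\;4(\arb(G))^2.
\end{equation*}
There is no real obstacle here; the chromatic-vs-arboricity bound is a routine counting argument, and the rest is a direct substitution into the previous theorem. If one wanted a cleaner constant, the only room for improvement would be in the degeneracy step (e.g.\ refining the bound on $\chi$ for specific classes), but for the general statement as formulated the factor $4$ drops out immediately from this reduction.
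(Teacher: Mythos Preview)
Your proposal is correct and follows exactly the paper's approach: deduce $(2\ell-1)$-degeneracy from $\arb(G)=\ell$, conclude $\chi(G)\leqslant 2\ell$, and substitute into Theorem~\ref{Theorem Arboricity Chromatic}. You merely spell out the degeneracy step in more detail than the paper does.
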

\begin{proof}
	Assume that $\arb(G)=\ell$. Then $G$ is $(2\ell-1)$-degenerate, which implies that $\chi(G)\leqslant 2\ell$. By the above theorem we get $\zeta(G)\leqslant 4\ell^2$. 
\end{proof}

\subsection{Graphs of bounded degree}
Let $G$ be a graph of maximum degree $\Delta$. By the Nash-Williams theorem \cite{Nash-Williams}, $\arb(G)\leqslant \left\lceil\frac{\Delta+1}{2}\right\rceil$. For instance, for a clique $K_{\Delta+1}$ we have $\arb(K_{\Delta+1})=\left\lceil\frac{\Delta+1}{2}\right\rceil$. On the other hand, any strongly woody coloring of the clique must be proper in the usual sense. Thus, we have $\zeta(K_{\Delta+1})=\chi'(K_{\Delta+1})=\Delta$ or $\Delta+1$, which is twice the arboricity of $K_{\Delta+1}$. We shall demonstrate, however, that for graphs of maximum degree $\Delta$ and sufficiently large girth the strong arboricity attains the minimum possible value and equals $\arb(G)$.

We will derive this fact as a consequence of the following result of Alon, Ding, Oporowski, and Vertigan \cite{AlonDOV}, concerning graph partitions into parts having small connected components. Indeed, suppose that in an edge colored graph $G$ by $k$ colors, the maximum size of any monochromatic connected subgraph is at most $C$. In particular, there is no monochromatic path of length $C+1$. Then, assuming that the girth of $G$ is at least $C+2$, we get that there is no monochromatic broken cycle in $G$, and therefore $\zeta(G)\leqslant k$.

\begin{theorem}[Alon, Ding, Oporowski, and Vertigan, \cite{AlonDOV}] Every graph of maximum degree $\Delta\geqslant 2$ has an edge $\left\lceil\frac{\Delta+1}{2}\right\rceil$-coloring such that every monochromatic component has at most $60\Delta-63$ edges.
\end{theorem}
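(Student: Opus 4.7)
The plan is to prove this via a local-modification argument on an initial edge decomposition. First, using the Nash-Williams theorem, fix any edge partition of $G$ into $k=\left\lceil\frac{\Delta+1}{2}\right\rceil$ forests $F_1,\ldots,F_k$; call this the initial coloring $c$. I would then seek to improve $c$ iteratively by small swaps (single-edge recolorings) that preserve the forest property of every class, until every monochromatic component has at most $60\Delta-63$ edges.

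To drive the iteration, I would introduce a potential function that penalizes large components, for instance
\[
\Phi(c)=\sum_{i=1}^{k}\sum_{T\in\mathrm{Comp}(F_i)} \varphi(|E(T)|),
\]
where $\varphi$ is a strictly convex, strictly increasing function (say $\varphi(x)=x^{2}$). The strategy is: as long as some monochromatic tree $T$ in color $i$ has more than $60\Delta-63$ edges, find an edge $e=uv\in T$ whose removal splits $T$ into two subtrees of comparable size, and recolor $e$ with a new color $j$ so that the class $F_j$ remains a forest, and so that $e$ does not merge two very large trees of $F_j$. Convexity of $\varphi$ then forces $\Phi$ to strictly decrease, so the procedure must terminate.

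The main obstacle, and the source of the exact constant $60\Delta-63$, is showing that such a safe recoloring always exists when $|E(T)|$ exceeds the stated threshold. Here one argues by pigeonhole among the $k\approx\Delta/2$ color classes. For a carefully chosen ``balancing'' edge $e=uv\in T$ (which exists in any sufficiently large tree by a centroid argument), each other color $j\ne i$ contributes two trees $T_u^{(j)}$ and $T_v^{(j)}$ in $F_j$ containing $u$ and $v$ respectively; recoloring $e$ to $j$ is forbidden only if $T_u^{(j)}=T_v^{(j)}$ (a cycle would be created) or if the merged tree would contribute more to $\Phi$ than the split saves. A counting argument over all $k-1$ candidate colors, using that the total number of edges of $G$ incident to $V(T)$ is bounded in terms of $\Delta$ and the sizes of the neighbouring trees, rules out all bad alternatives once $|E(T)|$ is large enough.

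The hard part will be tightening the counting so that the admissibility threshold comes out as $60\Delta-63$ rather than a weaker polynomial-in-$\Delta$ bound; this typically requires a careful choice of both the split edge $e$ and the potential $\varphi$, together with an amortized analysis of how much of $\Phi$ each local move can save in the worst case. Once this extremal optimization is in place, termination of the swap procedure yields the stated coloring.
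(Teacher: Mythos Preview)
The paper does not prove this theorem at all: it is quoted verbatim as a result of Alon, Ding, Oporowski, and Vertigan \cite{AlonDOV} and then used as a black box to derive Corollary~\ref{Corollary Maximum Degree}. There is therefore no ``paper's own proof'' to compare your proposal against.

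As for the proposal itself, it is a plausible high-level outline of a local-improvement argument, but it is explicitly incomplete: you yourself flag that the counting step yielding the precise threshold $60\Delta-63$ has not been carried out, and your sketch gives no indication of how the convex potential, the centroid edge, and the pigeonhole over $k-1$ colors interact to produce exactly this constant. In particular, the claim that ``recoloring $e$ to $j$ is forbidden only if $T_u^{(j)}=T_v^{(j)}$ or if the merged tree would contribute more to $\Phi$ than the split saves'' is correct, but you have not shown that at least one color $j$ avoids both obstructions once $|E(T)|>60\Delta-63$; this is the entire content of the theorem. If you intend to supply an independent proof rather than cite \cite{AlonDOV}, you must actually execute that extremal/counting step.
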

Using this theorem we get immediately the following result.
\begin{corollary}\label{Corollary Maximum Degree}
	Every graph of maximum degree $\Delta\geqslant 2$ and girth at least $60\Delta - 61$ satisfies  $\zeta(G)=\arb(G)$.
\end{corollary}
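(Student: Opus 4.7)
The plan is to execute the argument sketched in the paragraph preceding the corollary: apply the quoted theorem of Alon, Ding, Oporowski, and Vertigan to obtain an edge coloring whose monochromatic components are too small to contain any broken cycle, the notion of ``too small'' being made quantitative by the girth hypothesis.

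First I would record that $\arb(G)\leqslant\zeta(G)$ always, since every strongly woody coloring is in particular woody. So the task reduces to producing a strongly woody coloring using at most $\lceil(\Delta+1)/2\rceil$ colors, which is the Nash--Williams upper bound on $\arb(G)$ for graphs of maximum degree $\Delta$.

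Next I would invoke the ADOV theorem to fix an edge coloring $c$ of $G$ with $k=\lceil(\Delta+1)/2\rceil$ colors in which every monochromatic connected subgraph has at most $C=60\Delta-63$ edges. I would then check, using the girth assumption, that $c$ is already strongly woody: any broken cycle $C'-e$ arises from some cycle $C'$ of length at least $60\Delta-61$, so it contains at least $60\Delta-62>C$ edges and therefore cannot sit inside any single monochromatic component, much less be monochromatic. This delivers $\zeta(G)\leqslant\lceil(\Delta+1)/2\rceil$; combining with $\arb(G)\leqslant\zeta(G)$ and the Nash--Williams upper bound on $\arb(G)$ then collapses the chain to $\zeta(G)=\arb(G)$.

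The only real bookkeeping is the strict edge-count comparison $60\Delta-62>60\Delta-63$ between a broken cycle and a monochromatic component, and the girth threshold $60\Delta-61$ in the hypothesis is calibrated precisely to make this comparison go through. No independent obstacle remains, as the genuinely deep content is absorbed into the cited ADOV theorem.
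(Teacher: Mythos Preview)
Your argument tracks the paper's exactly: invoke the ADOV theorem to get an edge coloring with $k=\lceil(\Delta+1)/2\rceil$ colors whose monochromatic components have at most $60\Delta-63$ edges, then observe that a broken cycle has at least $60\Delta-62$ edges under the girth hypothesis and hence cannot be monochromatic. This correctly yields $\zeta(G)\leqslant\lceil(\Delta+1)/2\rceil$, and the paper's own proof is nothing more than the sentence ``Using this theorem we get immediately the following result.''

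There is, however, a genuine gap in your last step. The three facts $\arb(G)\leqslant\zeta(G)$, $\zeta(G)\leqslant\lceil(\Delta+1)/2\rceil$, and $\arb(G)\leqslant\lceil(\Delta+1)/2\rceil$ do \emph{not} ``collapse'' to $\zeta(G)=\arb(G)$: all three inequalities point the same way and there is no squeeze. Concretely, let $G$ be a cycle $C_n$ with $n\geqslant 60\Delta-61$ together with $\Delta-2$ pendant edges attached at one vertex; then the maximum degree is $\Delta$, the girth is $n$, and $\arb(G)=\zeta(G)=2$, yet the bound $\lceil(\Delta+1)/2\rceil$ produced by ADOV can be made arbitrarily large and says nothing about this equality. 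What the argument actually establishes is the chain $\arb(G)\leqslant\zeta(G)\leqslant\lceil(\Delta+1)/2\rceil$; the stated equality $\zeta(G)=\arb(G)$ follows only under the additional hypothesis $\arb(G)=\lceil(\Delta+1)/2\rceil$, which holds for instance when $G$ is $\Delta$-regular. The paper glosses over this same point.
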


Notice that in a special case of $\Delta=3$ we may get a better bound on the girth, by using a celebrated result of Thomassen \cite{Thomassen-Cubic}, asserting that every cubic graph has a $2$-edge-coloring in which every monochromatic component is a path with at most five edges.

\begin{corollary}\label{Corollary Thomassen Cubic}
	Every cubic graph $G$ of girth at least $7$ satisfies $\zeta(G)=2$.
\end{corollary}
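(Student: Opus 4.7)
The plan is to apply Thomassen's theorem directly, following the very same recipe that was just used to deduce Corollary \ref{Corollary Maximum Degree}, but with the improved bound on monochromatic components that is special to the cubic case.

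First I would fix a $2$-edge-coloring of $G$ guaranteed by Thomassen's result, so that every monochromatic component is a path with at most $5$ edges. In particular, any monochromatic simple path in $G$ has at most $5$ edges. Now suppose, for a contradiction, that some broken cycle $B=C-e$ were monochromatic. Since $G$ has girth at least $7$, the cycle $C$ has at least $7$ edges, so $B$ is a path with at least $6$ edges, all of the same color. This path would be contained in a monochromatic component of size at least $6$, contradicting Thomassen's bound. Hence the coloring is strongly woody and $\zeta(G)\leqslant 2$.

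For the matching lower bound, I would note that a cubic graph on $n$ vertices has $3n/2$ edges and is therefore not a forest, so $\arb(G)\geqslant 2$. Combined with the general inequality $\arb(G)\leqslant \zeta(G)$ mentioned in the introduction, this yields $\zeta(G)\geqslant 2$, and together with the upper bound above we conclude $\zeta(G)=2$.

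There is essentially no obstacle here: the proof is a one-line reduction to Thomassen's cubic decomposition theorem, with the only arithmetic check being that $6>5$, which is precisely why girth $7$ suffices. The only point worth double-checking is that a broken cycle in $G$ is indeed a simple path (so that it must sit inside a single monochromatic component rather than spanning several), which is immediate from its definition as $C-e$ for a cycle $C$.
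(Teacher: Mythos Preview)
Your proof is correct and follows exactly the approach the paper intends: apply Thomassen's cubic decomposition so that every monochromatic component is a path on at most $5$ edges, and then observe that a broken cycle in a graph of girth at least $7$ would be a monochromatic path on at least $6$ edges, a contradiction. Your added justification of the lower bound $\zeta(G)\geqslant\arb(G)\geqslant 2$ is also correct and simply makes explicit what the paper leaves implicit.
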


\subsection{Minor-closed classes of graphs}It is clear that large girth alone is not sufficient for bounded arboricity. This follows from the celebrated result of Erd\H{o}s (see \cite{AlonSpencer}) establishing existenece of graphs with arbitrarily large girth and chromatic number. However, if we restrict to a proper minor-closed class of graphs, then the situation looks different.

\begin{theorem}[Thomassen \cite{Thomassen}] Let $\mathcal{M}$ be a proper minor-closed class of graphs. Then there exists a constant $g=g(\mathcal{M})$ such that every graph $G\in \mathcal{M}$ of girth at least $g$ is $2$-degenerate (in consequence, $\arb(G)\leqslant 2$).
\end{theorem}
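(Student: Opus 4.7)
The plan is to combine two classical ingredients: first reduce ``not $2$-degenerate'' to the existence of a subgraph of minimum degree at least $3$; then use a Mader-type theorem saying that high girth plus minimum degree $3$ forces large complete minors; finally use the minor-closed hypothesis to derive a contradiction.

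Since $\mathcal{M}$ is a \emph{proper} minor-closed class, at least one graph $H$ lies outside $\mathcal{M}$, and since $H$ is a minor of $K_t$ for $t=|V(H)|$, the complete graph $K_t$ is also excluded from $\mathcal{M}$. Fix such a $t$. The technical heart of the proof is the following classical auxiliary lemma (of the Mader/Thomassen type): for every $t$ there is a constant $g_0(t)$ such that every graph of minimum degree at least $3$ and girth at least $g_0(t)$ contains $K_t$ as a minor. I shall set $g(\mathcal{M}):=g_0(t)$.

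Granting this lemma, I argue by contradiction. If some $G\in\mathcal{M}$ of girth $\geqslant g$ were not $2$-degenerate, one could iteratively strip vertices of degree at most $2$ and end up at a nonempty subgraph $H'\subseteq G$ with $\delta(H')\geqslant 3$ and girth still at least $g_0(t)$, since deleting vertices cannot decrease the girth. The auxiliary lemma then supplies a $K_t$-minor of $H'$, hence of $G$, which is incompatible with $K_t\notin\mathcal{M}$. The parenthetical $\arb(G)\leqslant 2$ is then a routine consequence of $2$-degeneracy: a vertex order witnessing $2$-degeneracy lets one split the at-most-two backward edges of each vertex between two colour classes, and a ``latest vertex in a monochromatic cycle'' argument shows that each colour class is a forest.

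I expect the main obstacle to be the auxiliary lemma. The strategy is to start BFS from a vertex $v$ out to a radius $r<g_0/2$; the girth hypothesis forces $B_r(v)$ to induce a tree, and the minimum-degree-$3$ hypothesis forces $|B_r(v)|\geqslant 3\cdot 2^{r-1}$. One then carves out $t$ pairwise disjoint connected subsets $B_1,\ldots,B_t$ in such a way that every pair $(B_i,B_j)$ is joined by at least one edge of $G$; contracting each $B_i$ to a single vertex realises $K_t$ as a minor. The delicate point, and the real difficulty, is to balance the radius so the blobs are simultaneously \emph{disjoint} (which wants $r$ small) and \emph{large enough to force pairwise inter-blob edges} (which wants $r$ large), using only the girth and minimum-degree hypotheses.
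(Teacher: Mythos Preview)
The paper does not give its own proof of this statement; it is quoted as a theorem of Thomassen and merely cited from \cite{Thomassen}. So there is nothing in the paper to compare your argument against.

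That said, your plan is correct and is essentially the standard route to this result. The reduction of ``not $2$-degenerate'' to the existence of a subgraph with $\delta\geqslant 3$, followed by the Thomassen/Mader-type fact that minimum degree $3$ together with large girth forces arbitrarily large clique minors, is exactly how one proves this, and your derivation of $\arb(G)\leqslant 2$ from $2$-degeneracy is fine. The only place your sketch is thin is the auxiliary lemma itself: the BFS growth estimate $|B_r(v)|\geqslant 3\cdot 2^{r-1}$ is right, but ``carving out $t$ blobs so that every pair is joined by an edge'' is not achieved just by making the blobs large; the actual arguments (Thomassen's original, or later versions by Mader and by K\"uhn--Osthus) build the branch sets iteratively rather than in one shot. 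You correctly flag this as the delicate point, so as a plan the proposal is accurate.
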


By Theorem \ref{Theorem Arboricity Chromatic} it follows that graphs from proper minor-closed classes with sufficiently large girth satisfy $\zeta(G)\leqslant 4$. However, this bound does not seem optimal. It is perhaps true that two colors are sufficient for strongly woody coloring of $2$-degenerated graphs with sufficiently large girth.

\section{Final remarks and open problems}

We conclude this short note with a collection of open problems. The most intriguing is the question concerning the strong arboricity of planar graphs.
\begin{conjecture}
	Every planar graph $G$ satisfies $\zeta(G)\leqslant 4$.
\end{conjecture}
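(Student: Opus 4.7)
Since the statement is a conjecture, two complementary lines of attack suggest themselves, and the main obstacle is essentially the same in both.

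The first line is a minimal counterexample argument coupled with discharging. Take a planar graph $G$ minimizing $|V(G)|+|E(G)|$ among those with $\zeta(G)\geqslant 5$. Standard reductions first force $G$ to be $2$-connected with minimum degree at least $3$: if $v$ has degree at most $2$, then $G-v$ admits a strongly woody $4$-edge-coloring by minimality, and extending to the one or two edges at $v$ is constrained by only a bounded number of forbidden patterns (triangles at $v$ that would fail to be rainbow, and monochromatic paths of length two with adjacent endpoints). The plan is then to enlarge the catalogue of reducible configurations, for instance two adjacent vertices of small total degree, a triangle sharing an edge with another triangle in which some incident vertex has low degree, and $4$-faces incident with low-degree vertices. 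For each configuration, delete a critical edge or vertex, color the smaller graph by induction, and verify that the removed edges admit an extension within the four-color palette. Finally, an Euler-style discharging argument with initial charges $d(v)-4$ at vertices and $2\ell(f)-4$ at faces should force one of the reducible configurations to appear in any counterexample.

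The second line tries to refine the acyclic-coloring proof of Theorem~\ref{Theorem Acyclic}, which converts a proper vertex coloring $f:V(G)\to\mathbb{Z}_k$ into an edge coloring via $g(uv)=f(u)+f(v)$. Borodin's bound $k=5$ is sharp, so a direct substitution of $\mathbb{Z}_4$ cannot work. Instead, one could try $f:V(G)\to\mathbb{Z}_2\oplus\mathbb{Z}_2$ combined with a modified, possibly non-linear, local rule that guarantees rainbow triangles; alternatively, start from a Schnyder wood on a planar triangulation, which partitions the inner edges into three trees $T_1,T_2,T_3$ so that each inner face meets each $T_i$ exactly once. Coloring $T_1\cup T_2$ with the two-shade forest trick of Theorem~\ref{Theorem Triangle-free Planar} and $T_3$ with two additional colors could simultaneously render every triangular face rainbow and kill any long monochromatic broken cycles inside $T_3$.

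The hard part in both plans is the tension between the rainbow condition on triangles, of which only $\binom{4}{3}=4$ rainbow triples are available, and the forest-parity mechanism used to break long broken cycles. In a triangulation each edge lies in two triangles, so adjacent rainbow constraints propagate and leave almost no local freedom, while the parity argument that works so cleanly for forests becomes difficult to globalize when colors are shared across the three trees of a Schnyder wood. Any successful proof will have to reconcile these two phenomena, and experience with Borodin's acyclic $5$-coloring theorem suggests that the discharging approach, if it succeeds at all, is likely to require extensive case analysis that may ultimately need computer assistance.
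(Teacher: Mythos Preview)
The statement you were asked to prove is listed in the paper as an open \emph{conjecture}; the paper offers no proof. What the paper does provide is a single heuristic remark: by the Four Color Theorem one can properly vertex-color a planar graph with $\mathbb{Z}_2\times\mathbb{Z}_2$ and push this to edges via $g(uv)=f(u)+f(v)$, obtaining a $3$-edge-coloring in which every triangle is rainbow; the hope is that a fourth color can then be deployed to break all longer monochromatic broken cycles. No mechanism for this last step is suggested.

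You correctly recognized that no proof exists and instead outlined two attack strategies. Your second line already contains the paper's $\mathbb{Z}_2\oplus\mathbb{Z}_2$ idea as a special case, and you go further by proposing Schnyder woods as an alternative structural input and by articulating the genuine obstruction: the rainbow-triangle constraint (only four rainbow triples from four colors) interacts badly with the two-shade parity trick on forests, and in a triangulation these constraints propagate globally. Your first line, a minimal-counterexample plus discharging scheme, is not mentioned in the paper at all; it is a reasonable program, though as you note the reducibility arguments for low-degree vertices are not as immediate as you suggest (extending a strongly woody coloring over a degree-$2$ vertex already requires controlling several broken cycles through that vertex, not just triangles).

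In short: there is nothing to compare your proposal against, because the paper has no proof. Your write-up is an honest and reasonably informed survey of plausible approaches, but it is not a proof, and you should not present it as one.
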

By the celebrated Four Color Theorem one gets easily a $3$-edge coloring of any planar graph $G$ with rainbow triangles. Indeed, one may start with a proper vertex coloring by the group $\mathbb{Z}_2\times\mathbb{Z}_2$ and then color every edge by the sum of colors on its ends. It seems plausible that including the fourth color one may also take care of longer broken cycles.

It is also natural to wonder about the best possible upper bound on $\zeta(G)$ in terms of the arboricity. As mentioned before, for every clique $K_n$ we have $\zeta(K_n)=\chi'(K_n)$, while $\arb(K_n)=\lceil n/2\rceil$. Thus, we have $\zeta(K_n)=2\arb(K_n)-1$. This suggests the following supposition.
\begin{conjecture}
	Every graph $G$ satisfies $\zeta(G)\leqslant 2\arb(G)$.
\end{conjecture}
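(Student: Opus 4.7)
The plan is to sharpen the argument of Theorem~\ref{Theorem Arboricity Chromatic}, replacing the factor $\chi(G)$ by a more careful use of the forest decomposition itself. Let $\ell = \arb(G)$ and fix a Nash-Williams decomposition $E(G) = F_1 \cup \cdots \cup F_\ell$. As in that proof I would shade each forest with two shades, producing a coloring $\varphi$ with $2\ell$ colors in total. The shading is to be chosen so that within each $F_i$ no path of length three is monochromatic; this already ensures, by the argument of Theorem~\ref{Theorem Arboricity Chromatic}, that no broken cycle arising from a cycle of length at least four is monochromatic under $\varphi$. What remains is to guarantee that every triangle of $G$ is rainbow under $\varphi$, which is exactly the missing piece that currently forces the proof of Theorem~\ref{Theorem Arboricity Chromatic} to invoke $\chi(G)$.

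The edges of a triangle $uvw$ are distributed among at most two forests. If they occupy three distinct forests then rainbowness is automatic; otherwise two edges, say $uv$ and $vw$, lie in some common $F_i$ while $uw$ lies in another forest, and rainbowness requires $uv$ and $vw$ to receive different shades in $F_i$. This imposes, at each vertex $v$ and each forest $F_i$, an extra local constraint: on the edges of $F_i$ incident to $v$, each pair completed to a triangle by an edge of $G\setminus F_i$ must receive different shades. The goal is to produce a $2$-shading of each $F_i$ respecting both the global ``no monochromatic path of length three'' condition and all such local constraints. I would approach this by starting from a parity-of-depth shading of each tree and performing a sequence of local repairs, flipping shades on a subtree hanging below a problematic edge; such a subtree flip preserves the absence of monochromatic paths of length three and can be used to resolve one triangle conflict at a time, provided the auxiliary local-constraint graph $H_{v,i}$ on the edges of $F_i$ incident to $v$ is bipartite.

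The principal obstacle is that $H_{v,i}$ need not be bipartite for an arbitrary forest decomposition: three edges $vu_1, vu_2, vu_3$ lying in the same $F_i$ together with edges $u_1u_2, u_2u_3, u_1u_3$ placed in other forests produce an odd triangle in $H_{v,i}$, and no $2$-shading of $F_i$ alone can then resolve all three triangle constraints simultaneously. Overcoming this apparently requires a structural claim of roughly the form: every graph $G$ admits a decomposition into $\arb(G)$ forests such that every $H_{v,i}$ is bipartite. I would attack this claim either via a matroid-exchange argument, using the matroid union theorem to swap edges between $F_i$ and $F_j$ so as to destroy odd cycles in $H_{v,i}$ without creating new ones, or probabilistically through the Lov\'{a}sz Local Lemma, showing that a random refinement of a Nash-Williams decomposition satisfies bipartiteness at every pair $(v,i)$ with positive probability. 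The tightness of the conjecture on cliques, where the round-robin $\chi'(K_n)$-edge-coloring splits naturally into $\lceil n/2\rceil$ matchings, suggests that a matching-like refinement of the forest decomposition is the right target; establishing such a structural existence result is the hard part of the proof, after which the local shading argument above should close the conjecture.
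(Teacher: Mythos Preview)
The statement you are addressing is presented in the paper as an open \emph{conjecture}, not as a theorem; the paper offers no proof, only the motivating observation that $\zeta(K_n)=2\arb(K_n)-1$. So there is no argument in the paper to compare your proposal against.

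As to the proposal itself: it is a strategy outline, not a proof, and you say as much. The reduction to the triangle case is sound---your $2\ell$-shading with no monochromatic $P_3$ in any $F_i$ does kill every broken cycle of length at least $3$, exactly as in Theorem~\ref{Theorem Arboricity Chromatic}. The identification of the obstruction is also correct: if some $H_{v,i}$ contains an odd cycle, no $2$-shading of $F_i$ can make all the corresponding triangles rainbow, so bipartiteness of every $H_{v,i}$ is genuinely necessary for this line of attack. What is missing is any argument that a decomposition with this property exists; neither the matroid-exchange idea nor the Local Lemma sketch is developed, and it is not clear that either can be made to work. There is also a second gap you pass over: even if every $H_{v,i}$ is bipartite, you have only shown that the triangle constraints are \emph{locally} satisfiable at each vertex. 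The shading of an edge $vw\in F_i$ is constrained simultaneously by $H_{v,i}$ and $H_{w,i}$, and these constraints propagate along the tree together with the no-monochromatic-$P_3$ condition; your subtree-flip repairs can create new monochromatic $P_3$'s when composed (for instance, flipping at a child $w$ of $v$ while $v$ has another child $w'$ can align $vw$, $vw'$ and an adjacent edge), so the claim that a sequence of such repairs terminates needs its own argument. In short, the plan isolates the right difficulty but leaves both the structural existence step and the global-consistency step open, which is consistent with the statement's status as a conjecture.
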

It is known that every graph $G$ with $\arb(G)=k$ is at most $(2k-1)$-degenerated. Recall that the \emph{coloring number} $\col(G)$ is the least integer $r$ for which there is a vertex ordering with maximum back-degree equal to $r-1$. This leads to the following stronger conjecture.

\begin{conjecture}
	Every graph $G$ satisfies $\zeta(G)\leqslant \col(G)$.
\end{conjecture}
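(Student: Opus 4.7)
The plan is to proceed by induction on $|V(G)|$ using a vertex ordering $v_1, \ldots, v_n$ witnessing $\col(G) = r$, so that each $v_j$ has at most $r-1$ back-neighbours. Since $v_n$ is last in the ordering, every neighbour of $v_n$ lies in $N^-(v_n)$, and hence $\col(G - v_n) \leq r$. By induction, $G - v_n$ admits a strongly woody $r$-colouring, and the inductive step reduces to choosing colours $\pi(u) \in \{1, \ldots, r\}$ for each of the (at most $r - 1$) back-edges $v_n u$ with $u \in N^-(v_n)$.

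I would require that $\pi$ be injective. Because $v_n$ has the largest index in $G$, any new monochromatic broken cycle $B$ must contain $v_n$ as its maximum-index vertex. If $v_n$ is \emph{interior} to $B$, then two distinct back-edges of $v_n$ lie on $B$ and, by injectivity, receive distinct colours, so $B$ cannot be monochromatic. If $v_n$ is an \emph{endpoint} of $B$, then $B$ has the form $v_n, w_1, w_2, \ldots, w_k$ with closing edge $v_n w_k$, and both $w_1, w_k \in N^-(v_n)$; in this case $B$ is monochromatic of colour $c$ precisely when $\pi(w_1) = c$ and $w_1, w_k$ lie in the same colour-$c$ component of $G - v_n$. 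Hence the extension step amounts to finding an injection $\pi$ such that, for each $u \in N^-(v_n)$ with $\pi(u) = c$, no other $u' \in N^-(v_n)$ is in the same colour-$c$ component as $u$ inside $G - v_n$. Existence of such $\pi$ reduces, via Hall's marriage theorem on the bipartite compatibility graph between $N^-(v_n)$ and $\{1, \ldots, r\}$, to the following inequality: for every $S \subseteq N^-(v_n)$, the number of colours that are simultaneously ``bad'' for every $u \in S$ is at most $r - |S|$.

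The crux and main obstacle of the proof is establishing this Hall inequality. A priori the colouring delivered by induction need not satisfy it, and one can engineer $2$-degenerate examples in which a fixed strongly woody colouring of $G - v_n$ admits no valid extension, so the naive induction cannot succeed on its own. I see two complementary routes around this. The first is to \emph{strengthen the induction hypothesis}, carrying through an auxiliary invariant designed to force the Hall condition at each step; a natural candidate is a uniform upper bound on the number of colours in which any single vertex lies inside a non-trivial tree of its colour class, since this controls the component-clustering of $N^-(v_n)$ that makes Hall fail. The second is a \emph{local re-colouring} argument: when Hall's condition fails at $v_n$, identify a blocking configuration in $G - v_n$ and dissolve it by swapping colours along an alternating substructure, in the spirit of Vizing-style fan recolourings for proper edge colourings; one must then check that the swap preserves strong woodiness and that the process terminates. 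Making either strategy work globally---finding an invariant with the right closure properties, or controlling the propagation of local swaps---is the hard step; the inductive skeleton and the case analysis of broken cycles through $v_n$ are, by comparison, routine.
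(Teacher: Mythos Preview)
The statement you are attempting to prove is listed in the paper as an open \emph{conjecture}, not a theorem; the paper offers no proof and explicitly places it among the ``Final remarks and open problems.'' So there is no authors' argument to compare against.

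As for your proposal itself: it is a strategy, not a proof, and you say as much. The inductive skeleton and the case analysis of broken cycles through $v_n$ are correct --- injectivity of $\pi$ kills the interior case, and the endpoint case reduces exactly to the component-avoidance condition you wrote down. The gap is precisely where you locate it: the Hall inequality for the compatibility bipartite graph need not hold for an arbitrary strongly woody colouring of $G-v_n$, and you give no argument that either of your two proposed rescues (a strengthened inductive invariant, or a Vizing-style recolouring) can be carried through. You even observe that the naive extension can fail already in the $2$-degenerate case. Until one of those routes is actually executed --- with a concrete invariant shown to be preserved, or a recolouring procedure shown to terminate while maintaining strong woodiness --- what you have is a plausible plan of attack on an open problem, not a proof. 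If you can close that gap, you will have resolved the conjecture, which would go beyond anything in the paper.
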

 
 Finally, we formulate a conjecture expressing a natural guess that large girth allows for strong arboricity to be as low as possible.

\begin{conjecture}
	For every integer $k$ there is an integer $g(k)$ such that every graph $G$ with $\arb(G)\leqslant k$ and girth at least $g(k)$ satisfies $\zeta(G)=\arb(G)$.
\end{conjecture}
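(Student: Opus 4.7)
The plan is to reduce the conjecture to a component-size statement, and then attack that statement by a combination of random recoloring (analysed via the Lovász Local Lemma) and a structural peeling argument based on degeneracy. Concretely, I would aim to prove the following sub-claim: for every $k$ there is a constant $D(k)$ such that any graph $G$ with $\arb(G)\leqslant k$ and girth $\geqslant g(k)$ admits an edge-decomposition into $k$ forests $E(G)=F_1\cup\cdots\cup F_k$ in which every connected component of every $F_i$ has at most $D(k)$ edges. Granting the sub-claim with $g(k)\geqslant D(k)+2$, any monochromatic broken cycle would be a monochromatic path with at least $g(k)-1 \geqslant D(k)+1$ edges inside some $F_i$, which is impossible. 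Combined with the automatic inequality $\zeta(G)\geqslant \arb(G)$, this yields $\zeta(G)=\arb(G)$.

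For the sub-claim, I would start from any Nash-Williams decomposition into $k$ forests (equivalently, an orientation of $G$ with maximum out-degree $\leqslant k$), and iteratively shrink oversized monochromatic trees by swapping edges between color classes. When a tree $T\subseteq F_i$ exceeds size $D(k)$, choose an interior edge $e\in T$ and attempt to re-assign it to some other $F_j$; this splits $T$ but may close a cycle in $F_j$. Large girth is precisely what rules out short obstructions: any new cycle created in $F_j$ by a single swap must have length at least $g(k)$, so a "safety margin" of candidate swaps is always available. This can be made rigorous by a local-lemma argument where the randomness is a uniform relabeling of edges near each long monochromatic path, and the bad events are (i) survival of a monochromatic path of length $D(k)+1$, and (ii) creation of a short cycle in some color class. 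The locality of the dependency graph is controlled by the product of the maximum color-class tree diameter and the arboricity.

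The main obstacle, and the reason the straightforward random recoloring used elsewhere in the paper does not directly work, is the forest constraint: a naive uniform recoloring in $\{1,\dots,k\}$ destroys the color-class forests with constant probability per edge, so the global forest property cannot survive even a single pass. All the care in the argument goes into designing random modifications that are local enough to preserve the forest property while still generating enough independence for LLL. I expect this approach, if it works at all, to yield only a tower-type or exponential bound on $g(k)$, and a cleaner proof would come from a structural peeling induction on $k$: one would try to show that an arboricity-$k$ graph of sufficiently large girth has a vertex partition $V=A\cup F$ with $A$ suitably sparse (in the spirit of the $2$-independent/forest partition used in Theorem \ref{Theorem Genus Girth}) and $\arb(G[F])\leqslant k-1$, then apply the inductive hypothesis on $F$ and use one fresh color on the edges incident to $A$. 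Establishing the existence of such a partition for all $k$ is the true heart of the problem, and it is where I expect the real technical work — possibly requiring a new sparsity/decomposition theorem for bounded-arboricity high-girth graphs — to lie.
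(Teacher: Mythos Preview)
This statement appears in the paper as an open \emph{conjecture}; the paper offers no proof, so there is nothing to compare against. As a strategy your proposal has a concrete flaw: the central sub-claim is false. You want, for each $k$, a bound $D(k)$ such that every graph with $\arb(G)\leqslant k$ and large girth decomposes into $k$ forests whose components each have at most $D(k)$ edges. This is already impossible for $k=2$, because bounded arboricity does not bound vertex degrees. Take a spider with centre $v$ of degree $n$ and legs of length $L$, and add one edge between the tips of two legs. This graph has $\arb=2$ and girth $2L+1$, with $n$ and $L$ arbitrary. In any decomposition into two forests, one forest contains at least $\lceil n/2\rceil$ of the edges at $v$, so its component through $v$ has at least $n/2$ edges; no finite $D(2)$ survives, regardless of girth.

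The reduction really only needs bounded longest monochromatic \emph{path} (a star of any size is harmless), and with that correction the spider is no obstruction. But the corrected sub-claim is itself an open-looking statement: the known bounded-component results you implicitly lean on (Alon--Ding--Oporowski--Vertigan, Thomassen) assume bounded maximum degree, not bounded arboricity, and your LLL sketch says nothing about controlling path lengths through high-degree vertices while keeping each colour class a forest. Your peeling alternative is structurally sound as an inductive template, but the specific partition you suggest ($A$ $2$-independent, $\arb(G[F])\leqslant k-1$) also fails at $k=2$ for $3$-regular graphs of large girth: disjointness of the closed neighbourhoods $N[a]$ forces $|A|\leqslant n/4$, whereas an edge count shows $G[F]$ can be a forest only when $|A|>n/4$. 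Both branches of your plan thus hit genuine obstructions before any local-lemma machinery becomes relevant.
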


Let us mention at the end that the notion of strong arboricity can be defined and studied for arbitrary \emph{matroids} in much the same way as it is with the usual arboricity. Indeed, broken cycles can be defined \emph{mutatis mutandis} in the matroid setting. It seems plausible that the corresponding parameter $\zeta(M)$ is bounded in terms of $\arb(M)$ for an arbitrary matroid $M$.

\end{document}